\documentclass [12pt]{article}
\usepackage[utf8]{inputenc}
\usepackage{amsmath}
\usepackage[T2A]{fontenc}
\usepackage{amsfonts, amssymb}
\usepackage{graphicx}
\usepackage[left=2.5cm,right=2.2cm,top=2.5cm,bottom=3.5cm]{geometry}
\usepackage{comment}

\newenvironment{proof}[1][] {\noindent {\bf Proof#1:} }{\hspace*{\fill}$\square$\medskip\par}

\newtheorem{thm}{Theorem}
\newtheorem{lem}[thm]{Lemma}
\newtheorem{prop}[thm]{Proposition}

\def\R{{\mathbb R}}
\def\E{{\mathbb E\,}}

\def\P{{\mathbb P}}


\def\N{{\mathbb N}}

\def\be#1\ee{\begin{equation}#1\end{equation}}

\def\ed#1{ {\mathbf 1}_{ \{#1  \}}}             

\def\ow{\overline{w}}
\def\oww{\overline{W}}
\def\orr{\overline{W}^{(r)}}
\def\orrt{\overline{W}^{(r,T)}}
\def\sol{\chi}

\begin{document}
	
	\author{M.A.\,Lifshits\footnote{Saint Petersburg State University, 199034, Saint Petersburg, University Emb., 7/9. {\tt mikhail@lifshits.org}}, S.E.\,Nikitin\footnote{Saint Petersburg State University, 199034, Saint Petersburg, University Emb., 7/9. {\tt nikitin97156@mail.ru}}
	}
	\title{Energy saving approximation of Wiener process \\ under unilateral constraints\footnote{The work supported by RSF grant 21-11-00047.}}
	\date{\today}
	\maketitle

	\begin{abstract}
	We consider the energy saving approximation  of a Wiener process under unilateral constraints. 
	We show that, almost surely,  on  large time intervals the minimal energy necessary for the approximation logarithmically depends on the interval's length. We also construct an adaptive approximation strategy that is optimal in a class of diffusion strategies and also provides the logarithmic order of energy consumption. 	
	\end{abstract}
	
\section{Problem setting and main result}
Let $AC[0,T]$ denote the space of absolutely continuous functions on the interval $[0,T]$.
For $h\in AC[0,T]$ let us call kinetic energy
\[
   	|h|_T^2 := \int_0^T h^\prime(t)^2 dt. 
\]
In the works \cite{BL,KL2,IKL,KL1,LS,LSiu,Scher} the approximation of a random process sample path 
by the function $h$ of smallest energy was considered under various constraints on closeness between $h$ and the sample path. In particular, in \cite{LS} the energy saving approximation was studied for a Wiener process $W$ under  {\it bilateral} uniform constraints.

For $T>0,r>0$ let us define the set of admissible approximations as
\[
    	M^\pm_{T, r} := \left\{h \in AC[0, T] \, \big| \, \forall t \in [0, T] : 
	  W(t) -r \le h(t)  \le  W(t)+ r ; h(0) = 0 \right\}
\]
and let
\[
   	I^\pm_W(T,r) := \inf \left\{|h|_T^2 \: \big| \: h \in M^\pm_{T,r} \right\}.
\]
It was proved in \cite{LS} that for every fixed $r>0$  it is true that
\[
   	\frac{I^\pm_W(T,r)}{T} 
	  \stackrel{\text{a.s.}}{\longrightarrow} {\mathcal C}^2\, r^{-2},
        \qquad \textrm{as } T\to \infty,
\]
where ${\mathcal C}\approx 0,63$ is some absolute constant (the exact value of ${\mathcal C}$ is unknown), i.e. 
the optimal approximation energy grows linearly in time.
	
In this work, we are interested in the behavior of a similar quantity under {\it unilateral} constraints, i.e. the set of
admissible approximations is
\[
	M_{T, r} := \left\{h \in AC[0, T] \, \big| \, \forall t \in [0, T] : 
	h(t)  \ge  W(t) -r ; h(0) = 0 \right\},
\]
and we are now interested in the behavior of
\[
	I_W(T,r) := \inf \left\{|h|_T^2 \: \big| \: h \in M_{T,r} \right\}.
\]
It is technically more convenient to translate the initial value of the approximating function to the point $r$, 
so that this function runs above the trajectory of the approximated process $W$. Let
\[
	M_{T, r}^\prime :=  \left\{h \in AC[0, T] \, \big| \, \forall t \in [0, T] : 
	h(t) \ge W(t); h(0) = r \right\}.
\]
Since the sets of functions $M_{T, r}$ and $M_{T, r}^\prime$ differ by a constant shift, it is easy to see that
\[
	I_W(T,r) = \inf \left\{|h|_T^2 \:\big | \: h \in M_{T,r}^\prime \right\}.
\]
	
Our main result asserts that, when $T$ grows, the quantity $I_W(T,r)$ grows merely logarithmically.
	
\begin{thm} \label{t:main} 
For every fixed $r>0$ it is true that
\begin{equation} \label{asymp_f}
			\frac{I_W(T,r)}{\log T} \stackrel{\text{a.s.}}{\longrightarrow} \frac12, 
            \qquad \textrm{as } T\to \infty. 
\end{equation}  
	\end{thm}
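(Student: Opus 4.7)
The proof naturally splits into matching upper and lower asymptotic bounds, so the plan is to show $\limsup I_W(T,r)/\log T\le 1/2$ and $\liminf I_W(T,r)/\log T\ge 1/2$ almost surely. Both directions start from the variational characterisation of the minimizer $h^*$ in $M_{T,r}^\prime$: first-order conditions (Lagrange multipliers for the pointwise constraint $h(t)\ge W(t)$) imply that $h^*$ is concave and non-decreasing, with $(h^*)'(T)=0$ and $h^*=W$ on the support of a non-negative Lagrange measure. Equivalently, $h^*$ is the smallest concave non-decreasing majorant of $W$ starting at $(0,r)$, i.e.\ a ``taut string'' above $W$.

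For the upper bound, my plan is to exhibit an admissible piecewise-linear $h$ with energy at most $(1/2+\varepsilon)\log T$ for large $T$. I would work on an adaptive grid $0=\sigma_0<\sigma_1<\sigma_2<\dots$ defined by hitting times of exponentially spaced levels by $W$ (or its running maximum), setting $h$ piecewise-linear between the corresponding ``ceiling values'' chosen so that $h$ dominates $W$ on each block. By the strong Markov property and Brownian scaling, each block's energy $\Delta_k\cdot s_k^2$ is a tractable random variable, and the number of blocks before time $T$ is of logarithmic order. Summing over blocks via a law of large numbers yields an asymptotic energy of the form $c\log T$, and tuning the geometric ratio of the grid (which amounts to optimising over one scalar parameter) pins down the constant $c=1/2$.

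For the lower bound, the basic tool is Cauchy--Schwarz: for any admissible $h$ and any partition $0=\tau_0<\tau_1<\dots<\tau_n\le T$,
\[
I_W(T,r)\ \ge\ \sum_{k=1}^n \frac{(h(\tau_k)-h(\tau_{k-1}))^2}{\tau_k-\tau_{k-1}}.
\]
Using $h(\tau_k)\ge W(\tau_k)$ and minimising the right-hand side over admissible values gives a quadratic program whose optimum equals the energy of the upper concave hull of $\{(0,r)\}\cup\{(\tau_k,W(\tau_k))\}_{k\ge 1}$. Estimating this concave-hull energy almost surely, via the strong Markov property and the distribution of Brownian hitting times, for an appropriate choice of $\tau_k$ (e.g.\ the same grid as in the upper bound) yields a lower bound of the right order. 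A cleaner variant uses the Lagrangian dual representation
\[
I_W(T,r)\ =\ \sup_F\Bigl\{-rF(0)+\int_0^T F(s)\,dW(s)-\tfrac14\int_0^T F(s)^2\,ds\Bigr\},
\]
where $F$ ranges over non-negative non-increasing functions on $[0,T]$ with $F(T)=0$, the primal-dual relation being $F=2(h^*)'$; a suitably chosen random $F$ then provides the matching lower bound directly.

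The main obstacle in both directions is matching the \emph{exact} constant $1/2$. Heuristic computations with naive grids or test functions typically yield the correct logarithmic order but with the wrong constant---a dyadic grid gives $1/\log 2$, while the simplest diffusion strategy $h'=c/(h-W)$ gives $1$ (which, as mentioned in the abstract, is optimal inside the class of diffusion strategies). Obtaining the constant $1/2$ therefore requires a delicate optimisation exploiting Brownian self-similarity, the precise law of hitting times, and the structure of the concave majorant of $W$, so that the per-block contributions in the upper-bound construction and in the Cauchy--Schwarz lower bound match asymptotically.
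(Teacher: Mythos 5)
Your structural reduction is sound and matches the paper: the minimizer in $M_{T,r}^\prime$ is indeed (essentially) the minimal concave non-decreasing majorant of $W$ started at $(0,r)$, and the paper's Proposition~\ref{maj_form} establishes exactly this by a rearrangement/convexity argument rather than by Lagrange multipliers. But from that point on there is a genuine gap: everything you propose would yield the logarithmic \emph{order} of $I_W(T,r)$, while the entire difficulty of the theorem is the constant $\tfrac12$, and your plan contains no mechanism that actually produces it. For the upper bound, the claim that ``tuning the geometric ratio of the grid pins down $c=1/2$'' is unsubstantiated and, for the natural version of your construction, false: taking $\sigma_k$ to be the hitting time of level $\rho^k$ and $h$ linear at ceiling values $\rho^{k+1}$, each block contributes energy $\rho^2/S_k$ with $S_k$ i.i.d.\ standard hitting times ($\E[1/S_k]=1$), and there are about $\log T/(2\log\rho)$ blocks, so the construction yields $\inf_{\rho>1}\rho^2/(2\log\rho)=e$ rather than $\tfrac12$; no single-parameter geometric grid closes that gap. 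For the lower bound, both of your variants (Cauchy--Schwarz over a partition, or the dual $\sup_F$ formulation, which is itself correct) simply restate the problem as ``estimate the energy of the concave hull of $W$,'' which is precisely the quantity whose asymptotics must be computed.

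The missing ingredient is the precise law of the concave majorant of Brownian motion. The paper imports Groeneboom's theorem: with $\tau(a)$ the vertex-time parametrization of the global MCM $\oww$, the energy $L(e^{a_0},e^{a_0+t})=\int_{\tau(e^{a_0})}^{\tau(e^{a_0+t})}\oww'(s)^2\,ds$ is a L\'evy process in $t$ with mean $t$, whence $L(1,V)/\log V\to 1$ a.s.; combined with tail estimates showing $\tau(T^{1/2-\delta})<T<\tau(T^{1/2+\delta})$ eventually a.s.\ (i.e.\ $\tau(V)\asymp V^2$), this gives $|\orr|_T^2/\log T\to\tfrac12$, from which both bounds of the theorem follow quickly. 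Without this input (or an equivalent exact computation of the MCM's energy process), your outline cannot identify the constant --- a difficulty you yourself flag as ``the main obstacle'' but do not resolve.
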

	
In Section \ref{s:2} we establish a connection between the unilateral energy saving approximation of
arbitrary continuous function with its minimal concave majorant. 
In Section \ref{s:3} we establish the necessary properties of Wiener process minimal concave majorant using the results of Groeneboom \cite{concMaj}. Section \ref{s:4} contains the proof of Theorem~\ref{t:main}. 
	
In Sections \ref{s:5}--\ref{s:6} we consider a class of adaptive Markovian (diffusion) approximation strategies 
based on the current and past values of $W$. We prove that in this class the optimal strategy is defined by the formula
\[
	h'(t) =  \frac{1}{h(t)-W(t)} \,. 
\]
For this strategy the energy consumption also has the logarithmic order but is two times larger than that for the optimal non-adaptive strategy using the information about the whole trajectory of $W$.
Namely,  
\[
	\frac{ |h|_2^2} {\log T} \stackrel{\text{a.s.}}{\longrightarrow} 1,
    \qquad \textrm{as } T\to \infty.
\]

\section{Concave majorants as efficient approximations}
\label{s:2}
	
It turns out that the optimal energy saving approximation under unilateral constraints may be described in terms of
the minimal concave majorant (MCM) of the approximated function. 
Let $w:[0,T]\mapsto \R$  be a continuous function. Then the corresponding MCM $\ow$ is the minimal concave function satisfying
conditions
\[
	\ow (t) \ge w(t), \qquad 0\le t\le T.
\]
		
\begin{prop} \label{maj_form} 
Let $r>w(0)$. Then the problem $|h|_T^2\to \min$ under the constraints $h(0)=r$ and
\[
		h(t) \ge w(t), \qquad 0\le t\le T,
\]
has a unique solution $\chi_*$ of the following form. 
		
(a) If $r\ge \max_{0\le t\le T} w(t)$, then $\chi_*(t)\equiv r$.
		
(b) If $r<\max_{0\le t\le T} w(t)$, then $\chi_*$ is defined differently on three intervals.
On the initial interval, $\chi_*$ is an affine function whose graph contains the point $(0,r)$ and is a tangent to the graph
of $\ow$. 
Then $\chi_*$ coincides with $\ow$ until the first moment when the maximum of $w$ is attained. 
Finally, after that moment, $\chi_*$ is a constant.
\end{prop}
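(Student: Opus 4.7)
The plan is to confirm that the proposed $\chi_*$ is admissible and then to establish its optimality via a variational argument. Existence and uniqueness of an energy minimizer are in any case standard: $M_{T,r}^\prime$ is a closed convex subset of the affine Hilbert space $\{h \in AC[0,T] : h(0) = r,\ h' \in L^2\}$, and the energy $h \mapsto \int_0^T h'(t)^2\, dt$ is strictly convex, coercive, and weakly lower semicontinuous. Case (a) is immediate since $\chi_* \equiv r$ is admissible with zero energy. For case (b), let $M := \max_{[0,T]} w$ and $t^* := \min\{t \in [0,T] : w(t) = M\}$. Using that $\ow$ is concave with $\ow(0) = w(0) < r < M = \ow(t^*)$, the function $t \mapsto (\ow(t)-r)/t$ attains its supremum on $(0,T]$ at some $t_0 \in (0, t^*]$; set $s := (\ow(t_0)-r)/t_0$. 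I define $\chi_*(t) := r + st$ on $[0,t_0]$, $\chi_*(t) := \ow(t)$ on $[t_0, t^*]$, and $\chi_*(t) := M$ on $[t^*, T]$, and verify $\chi_* \ge \ow \ge w$ on $[0,t^*]$ and $\chi_* = M \ge w$ on $[t^*,T]$.

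The core of the proof is the energy comparison. For any admissible $h$, expand
\[
    \int_0^T h'(t)^2\, dt = \int_0^T \chi_*'(t)^2\, dt + 2\int_0^T \chi_*'(t)(h - \chi_*)'(t)\, dt + \int_0^T (h - \chi_*)'(t)^2\, dt.
\]
The last term is nonnegative and vanishes only when $h \equiv \chi_*$ (since $h(0) = \chi_*(0) = r$), which will yield both optimality and uniqueness once the cross term is shown to be nonnegative. The construction is designed so that $\chi_*$ is concave on $[0,T]$: the first-order optimality of $t_0$ for $\phi(t) = (\ow(t)-r)/t$ yields $\ow'(t_0^+) \le s \le \ow'(t_0^-)$, so the slope does not increase across $t_0$; on $[t_0, t^*]$ the slope is that of the concave $\ow$; and at $t^*$ the slope drops from $\ow'(t^{*-}) \ge 0$ to $0$. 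Hence $\chi_*'$ is non-increasing on $[0,T]$ and $d\chi_*'$ is a non-positive Stieltjes measure. Integration by parts, with boundary terms vanishing because $h(0) = \chi_*(0)$ and $\chi_*'(T) = 0$, yields
\[
    \int_0^T \chi_*'(t)(h - \chi_*)'(t)\, dt = -\int_0^T (h(t) - \chi_*(t))\, d\chi_*'(t).
\]

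It remains to show that $h \ge \chi_*$ on the support of $-d\chi_*'$, which makes the right-hand side nonnegative. The support of $-d\chi_*'$ is contained in $[t_0, t^*]$. On each maximal open ``gap'' interval inside $[t_0, t^*]$ where $\ow > w$, the classical structure of the minimal concave majorant forces $\ow$ to be affine, so the curvature measure vanishes there. On the complementary contact set $\{t \in [t_0,t^*] : \ow(t) = w(t)\}$ we have $\chi_* = \ow = w \le h$. Moreover, kinks of the concave $\ow$ can only occur on the contact set, so if $\chi_*$ has an atom of $-d\chi_*'$ at $t_0$ then necessarily $\chi_*(t_0) = \ow(t_0) = w(t_0) \le h(t_0)$; and at $t^*$ we have $\chi_*(t^*) = M = w(t^*) \le h(t^*)$. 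Consequently the cross term is nonnegative, giving $\int_0^T h'(t)^2\,dt \ge \int_0^T \chi_*'(t)^2\, dt$, with strict inequality whenever $h \not\equiv \chi_*$ by the third term. The main technical obstacle is the structural description of $\ow$ — affine on every gap interval and touching $w$ at each of its kinks — together with the careful bookkeeping of possible atoms of $d\chi_*'$ at $t_0$ and $t^*$ in the Stieltjes integration.
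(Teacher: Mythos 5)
Your proof is correct, but it follows a genuinely different route from the paper's. You construct the candidate $\chi_*$ explicitly and verify its optimality by the obstacle-problem/variational-inequality argument: expand $|h|_T^2=|\chi_*|_T^2+2\int\chi_*'(h-\chi_*)'+|h-\chi_*|_T^2$, integrate the cross term by parts, and use that the curvature measure $-d\chi_*'$ is nonnegative and supported on the contact set $\{\chi_*=w\}$, where $h\ge w=\chi_*$. This gives existence, uniqueness and optimality in one stroke, but it leans on the classical structure theorem for the minimal concave majorant (affine on every gap interval, kinks only at contact points), which you invoke without proof; a complete writeup would need the standard chord-replacement argument for that, plus a word on why $\ow(0)=w(0)<r$ so that $t_0$ exists, and a remark that in the edge case $t^*=T$ the boundary term $\chi_*'(T^-)\bigl(h(T)-\chi_*(T)\bigr)$ does not vanish but is still nonnegative. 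The paper instead proves abstract existence (compactness plus lower semicontinuity) and uniqueness (strict convexity), then identifies the minimizer $\chi$ with $\chi_*$ in two steps that avoid the MCM structure theorem entirely: first, replacing $\chi'$ by the non-increasing monotone rearrangement of $\max\{\chi',0\}$ produces an admissible competitor with no larger energy, so by uniqueness $\chi$ is itself concave and non-decreasing, whence $\chi\ge\chi_*$; second, if $\chi(t_0)>\chi_*(t_0)$ somewhere, truncating $\chi$ by a suitable affine majorant of $\chi_*$ strictly lowers the energy by the H\"older (Cauchy--Schwarz) inequality, a contradiction. Your approach is more constructive and closer to the standard PDE treatment of obstacle problems; the paper's rearrangement trick is more self-contained and sidesteps the fine structure of $\ow$.
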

\medskip

The optimal energy saving majorant $\chi_*$ is shown in Figure~\ref{fig:eef}.

\begin{figure*}[ht]
	\centering
    \includegraphics[scale=0.3]{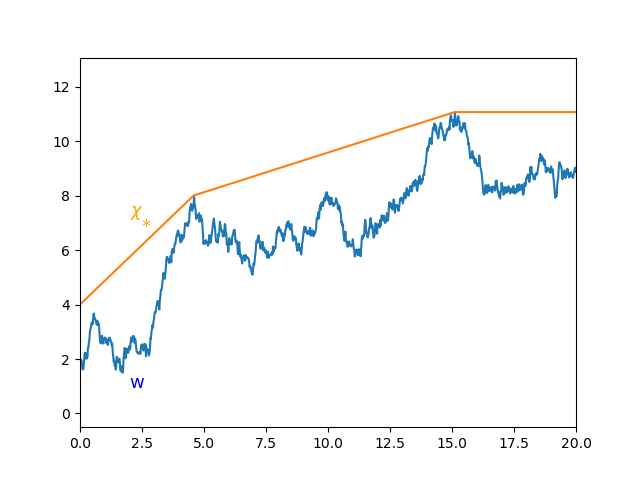}
	\caption{The optimal energy saving majorant. }
	\label{fig:eef}
\end{figure*}
	\medskip
	
\begin{proof}[ of the proposition]  The problem's solution exists since for every $M>0$ the set of functions 
\[
		\{ h\in AC[0,T]: h(0)=r, h\ge w, |h|_T\le M\}
\]   
is compact in the space of continuous functions equipped with the topology of uniform convergence and the functional
$|\cdot|_T^2$ is lower semi-continuous in this topology.
		
The uniqueness of the solution follows from the fact that the set of functions satisfying problem's assumptions
is convex and the functional $|\cdot|_T^2$ is strictly convex on this set.

Let us describe the solution.
		
Since  case (a) is trivial, we consider case (b). 		   
		
Let $\chi(\cdot)$ be the solution of our problem. We show first that $\chi$ is a convex non-decreasing function. 
Consider the function
\[
		\chi_1(t):= r +\int_0^t g(s) ds, \qquad 0\le t \le T,
\]
where $g(\cdot)$ is the non-increasing monotone rearrangement of the function $\max\{\chi'(\cdot),0\}$.
Then $\chi_1$ is a concave non-decreasing function, $\chi_{1}(0)=r$ and $\chi_{1}(t) \ge \chi(t) \ge w(t)$ for all $t\in[0,T]$. Therefore, $\chi_{1}$ satisfies the problem's constraints.
On the other hand,
\[
		|\chi_{1}|_T^2=\int_0^T g(s)^2 ds = \int_0^T  \max\{\chi'(\cdot),0\}^2 ds 
		\le  |\chi|_T^2.
\]
Due to the problem solution uniqueness we obtain $\chi_1=\chi$. This equality proves that $\chi$ is concave and non-decreasing.
		
Since $\chi_*$ is the smallest concave non-decreasing function satisfying problem's constraints, we have
\[
		\chi(t)\ge \chi_*(t), \qquad 0\le t\le T.
\]
		
Furthermore, let us prove that $\chi(T)=\chi_*(T)=\max_{0\le t\le T} w(t)$.
Indeed, in case (b) the function 
\[
		\chi_2(t):= \min\{ \chi(t), \chi_*(T) \}, \qquad 0\le t \le T,
\]
satisfies both problem constraints and $|\chi_{2}|_T^2 \le |\chi|_T^2$; by the uniqueness of the solution,
we obtain $\chi=\chi_2$. In particular, $\chi(T)=\chi_*(T)$.
		
Finally, assume that the strict inequality $\chi(t_0)>\chi_*(t_0)$ holds for some $t_0\in [0,T]$. 
Then, since the function $\chi_*$ is concave and non-decreasing, there exists a non-decreasing affine function
$\ell(\cdot)$ such that
\[
		\chi_*(t)\le \ell(t), \qquad 0\le t \le T,
\]
but $\ell(t_0)< \chi(t_0)$. However, at the endpoints of the interval $[0,T]$ the opposite inequality is true,
because
\[
		\chi(0) = r =\chi_*(0) \le \ell(0), \qquad   \chi(T)  =\chi_*(T) \le \ell(0).
\]
Therefore, there exists a non-degenerated interval $[t_1,t_2]\subset[0,T]$ such that
$t_0\in[t_1,t_2]$, $\ell(t_1)=\chi(t_1)$,  $\ell(t_2)=\chi(t_2)$.

Since $\ell'(\cdot)$ is a constant, while $\chi'(\cdot)$ is not a constant on $[t_1,t_2]$,
it follows from H\"older inequality that
\begin{eqnarray*}
			&& (t_2-t_1) \int_{t_1}^{t_2} \chi'(t)^2 dt > \left(\int_{t_1}^{t_2} \chi'(t)dt\right)^2
			= (\chi(t_2)-\chi(t_1))^2
\\
			&=& (\ell(t_2)-\ell(t_1))^2 =  \left(\int_{t_1}^{t_2} \ell'(t)dt\right)^2
			=  (t_2-t_1) \int_{t_1}^{t_2} \ell'(t)^2 dt. 
\end{eqnarray*}
We obtain
\[
		\int_{t_1}^{t_2} \chi'(t)^2 dt >  \int_{t_1}^{t_2} \ell'(t)^2 dt.
\]
It follows that the function
\[
		\chi_3(t):= \min\{ \chi(t), \ell(t) \}, \qquad 0\le t \le T,
\]
satisfies the problem's constraints and  $|\chi_{3}|_T^2 <  |\chi|_T^2$ but this is impossible
by the definition of $\chi$. Therefore, the assumption $\chi(t_0)>\chi_*(t_0)$ brought us to
a contradiction.			 
\end{proof}
	
\section{Minimal concave majorant of a Wiener process}
\label{s:3}	
We recall some notation and results from the article \cite{concMaj} that will be used in the sequel.
Denote	 
\[
	\tau(a) := \sup \left\{t>0 \, \big| \, W(t)-t/a=\sup_{u>0}\left(W(u)-u/a\right)\right\}.
\]
The function $a\mapsto \tau(a)$ is non-decreasing.
	
According to \cite[Corollary~2.1]{concMaj} for every $a>0$ the random variable $\frac{\tau(a)}{a^2}$ has 
the distribution density 
\[
	q(t) = 2 \, \E\left(\frac{X}{\sqrt{t}} - 1\right )_+, \qquad t>0,
\]
where $ x_+ := x \ed{x>0} $, $X$ is a standard normal random variable.
	
Next, let $\oww$ be the global MCM for the Wiener process $W(t),t\ge 0$. 
Define a random process $L$  as
\[
	L(a,b) := \int_{\tau(a)}^{\tau(b)} \oww'(t)^2 dt.
\]
	
Our study is essentially based on the following result due to Groeneboom.
	
\begin{lem} \cite[Theorem~3.1]{concMaj}.
   For every $a_0>0$ the process $X(t) := L(e^{a_0},e^{a_0+t}), t\ge 0$, 
   is a pure jump process with independent stationary increments and $\E X(t)=t$.     
\end{lem}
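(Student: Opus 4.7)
My plan is to derive the three claims about $X$ from Brownian self-similarity combined with the piecewise-linear structure of the MCM. Almost surely, $\oww$ is piecewise linear with a countable vertex set, and the non-decreasing step function $a\mapsto\tau(a)$ parametrises it: its jump points $a_1<a_2<\dots$ are the reciprocals of the consecutive slopes of $\oww$, and $\tau_i:=\tau(a_i)$ are the corresponding vertex times. On each segment $(\tau_{i-1},\tau_i)$ the derivative $\oww'(t)$ equals the constant $1/a_i$, so
\[
L(a,b)=\sum_{i:\,a_i\in(a,b]} a_i^{-2}\,(\tau_i-\tau_{i-1}),
\]
and $t\mapsto X(t)=L(e^{a_0},e^{a_0+t})$ is automatically of pure-jump type, with jumps at $t=\log a_i-a_0$ of magnitudes $a_i^{-2}(\tau_i-\tau_{i-1})$.

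For stationarity of increments I would use Brownian scaling. Setting $W_c(s):=c^{-1/2}W(cs)$, so that $W_c\stackrel{d}{=}W$, a direct substitution gives the scaled quantities $\tau_c(a)=c^{-1}\tau(a\sqrt c)$ and $\oww_c'(s)=c^{1/2}\oww'(cs)$. A change of variable $u=cs$ then yields
\[
L_c(a,b)=L(a\sqrt c,\,b\sqrt c),
\]
hence $L(a,b)\stackrel{d}{=}L(\lambda a,\lambda b)$ for every $\lambda>0$. Applied jointly with $\lambda=e^{-s}$, $a=e^{a_0+s}$, $b=e^{a_0+s+t}$, this gives $X(s+t)-X(s)\stackrel{d}{=}X(t)$, and the same argument applied to several disjoint $t$-intervals produces full joint stationarity.

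The truly delicate step is independence of increments. Here I would invoke the strong Markov property of $W$ at the stopping time $\tau(a)$: the post-$\tau(a)$ trajectory, after subtracting the tangent line of slope $1/a$, is a Wiener path with drift whose own minimal concave majorant coincides with the restriction of $\oww$ to $[\tau(a),\infty)$. The pieces of $\oww$ with slopes in a window $[1/b,1/a]$ are thus encoded in this post-$\tau(a)$ path up to its first down-crossing to slope $1/b$, whereas the pieces in a disjoint slope window depend on an independent portion of the Wiener path obtained by a further strong-Markov step at $\tau(b)$. This excursion-theoretic decomposition of the MCM into independent slope windows is the technical heart of the argument and the principal difficulty; it is the substance of Groeneboom's Theorem~3.1, and once established it yields that $X$ is a non-decreasing pure-jump L\'evy process.

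Finally, once $X$ is a L\'evy process we have $\E X(t)=c\,t$ for some $c\ge 0$, and it remains only to identify $c=1$. I would compute $\E L(a,b)$ from the series representation above by Fubini, using the explicit density $q(\cdot)$ of $\tau(a)/a^2$ recalled earlier to evaluate $\E(\tau_i-\tau_{i-1})$, or equivalently integrating against the L\'evy measure of the jumps of $\tau$; Brownian scaling reduces the identification to a single deterministic integral which turns out to equal $1$.
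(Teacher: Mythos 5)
This lemma is not proved in the paper at all: it is imported verbatim from Groeneboom \cite[Theorem~3.1]{concMaj}, so there is no internal argument to compare against. Judged on its own terms, your sketch gets the two easy ingredients right, and essentially in the way Groeneboom does: the a.s.\ piecewise linearity of $\oww$ gives the pure-jump representation $L(a,b)=\sum_{a_i\in(a,b]}a_i^{-2}\bigl(\tau(a_i)-\tau(a_i^-)\bigr)$, and the Brownian scaling computation $L(a,b)\stackrel{d}{=}L(\lambda a,\lambda b)$ (valid jointly in $(a,b)$, since it comes from a single path transformation) does yield stationarity of increments of $X$ under the logarithmic reparametrisation.

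The genuine gap is exactly where you locate it, but the mechanism you propose for closing it would fail as written. The random time $\tau(a)$ is the location of the global supremum of $t\mapsto W(t)-t/a$ over all $t>0$; it depends on the entire future of the path and is \emph{not} a stopping time, so the strong Markov property cannot be invoked at $\tau(a)$, and the post-$\tau(a)$ path minus the tangent line is not simply ``a Wiener path with drift'' (it is a Williams-type conditioned process). Establishing that disjoint slope windows of the majorant are independent is precisely the content of Groeneboom's Theorem~2.1, which describes $a\mapsto\tau(a)$ as a Markov jump process with explicitly computed jump kernel via time inversion and direct distributional calculations, not via a strong-Markov decomposition at the vertex times. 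Likewise, the normalisation $\E X(t)=t$ (constant equal to $1$, not merely some $c\ge 0$) is asserted to ``turn out'' correct but never computed; it requires the explicit L\'evy measure or an explicit evaluation of $\E L(a,b)$. Since the paper's own justification is the citation, deferring these points to \cite{concMaj} is legitimate, but your proposal should not present the strong-Markov step as the route to independence.
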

	
Moreover, there is an explicit description of the L\'evy measure of $X$ in \cite{concMaj} but we do not need it here.
We are only interested in the Kolmogorov's strong law of large numbers for $X$ which asserts that
\[
	\frac{X(t)}{t}   \stackrel{\text{a.s.}}{\longrightarrow} 1,
    \qquad \textrm{as } t\to\infty.
\]

 Using the definition of $X$, letting $a_0=0$, and making the variable change $V=e^t$, we may reformulate this result as
\begin{equation} \label{slln}
		\frac{L(1,V)}{\log V}   \stackrel{\text{a.s.}}{\longrightarrow} 1,
		\qquad \textrm{as } V\to\infty.
\end{equation}
	
\begin{lem} \label{tau_est}
  For every $ \delta \in \left(0, \frac 12\right)$ with probability $1$ for all sufficiently large $T$ it is true that
  \begin{equation} 
			\tau\left(T^{\frac 12 + \delta}\right) > T > \tau\left(T^{\frac 12 - \delta}\right).
  \end{equation}
\end{lem}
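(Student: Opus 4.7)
The plan is to use the scaling implied by the density formula for $\tau(a)/a^2$, apply Borel--Cantelli along a geometric subsequence $T_n = 2^n$, and then interpolate to arbitrary $T$ using the monotonicity of $a\mapsto\tau(a)$ stated earlier in the section.

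First, I would recast both inequalities as tail estimates for the fixed-in-law random variable $\tau(a)/a^2$ with density $q(t)=2\,\E(X/\sqrt{t}-1)_+$. Writing $a_n:=T_n^{1/2+\delta}$ and $b_n:=T_{n+1}^{1/2-\delta}$, the inequality $\tau(T^{1/2+\delta})>T$ for $T\in[T_n,T_{n+1}]$ is implied by $\tau(a_n)>T_{n+1}$, i.e.\ $\tau(a_n)/a_n^2>2T_n^{-2\delta}$; symmetrically, $\tau(T^{1/2-\delta})<T$ for such $T$ is implied by $\tau(b_n)<T_n$, i.e.\ $\tau(b_n)/b_n^2<T_{n+1}^{2\delta}/2$.

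Next I would pin down the two one-sided estimates on $q$. Near zero one has $q(t)\le \sqrt{2/(\pi t)}$ simply by bounding $(X/\sqrt t-1)_+\le (X/\sqrt t)_+$ and using $\E X_+=1/\sqrt{2\pi}$, yielding
\[
\P\!\left(\tfrac{\tau(a)}{a^2}\le \eps\right)=\int_0^\eps q(t)\,dt\le C\sqrt{\eps}.
\]
Near infinity, $(X/\sqrt t-1)_+$ is nonzero only on $\{X>\sqrt t\}$, and a standard Gaussian-tail computation gives $q(t)\le C e^{-t/2}/\sqrt{t}$, hence
\[
\P\!\left(\tfrac{\tau(a)}{a^2}\ge M\right)\le C' e^{-M/2}.
\]
Plugging in $\eps = 2T_n^{-2\delta}$ and $M=T_{n+1}^{2\delta}/2$ shows that the probabilities of the bad events at stage $n$ are bounded respectively by $C\sqrt 2\cdot 2^{-n\delta}$ and $C' e^{-T_{n+1}^{2\delta}/4}$; both series converge.

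By Borel--Cantelli, almost surely for all large $n$ we have $\tau(a_n)>T_{n+1}$ and $\tau(b_n)<T_n$ simultaneously. Given arbitrary $T\in[T_n,T_{n+1}]$, the monotonicity of $\tau$ yields $\tau(T^{1/2+\delta})\ge\tau(a_n)>T_{n+1}\ge T$ and $\tau(T^{1/2-\delta})\le\tau(b_n)<T_n\le T$, completing the proof. The main obstacle is purely a matching one: the polynomial decay $\sqrt\eps$ from the zero-density side is on the borderline of summability, which is why the discretization has to be geometric and why one must sharpen ``$\tau(a_n)>T_n$'' to ``$\tau(a_n)>T_{n+1}$'' before invoking Borel--Cantelli; the Gaussian side of the estimate is soft and causes no difficulty.
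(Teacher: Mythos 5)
Your proposal is correct and follows essentially the same route as the paper: tail bounds for $\tau(a)/a^2$ from the density $q$ (a $\sqrt{\eps}$ bound near zero, a Gaussian-type bound $e^{-M/2}$ at infinity), Borel--Cantelli along a discretization with the events strengthened by a factor of $2$, and interpolation via the monotonicity of $\tau$. The only cosmetic difference is that you use the geometric sequence $2^n$ for both bounds, whereas the paper uses $T_n=n$ for the upper-tail (super-exponentially decaying) side and $2^n$ only where the polynomial decay $T^{-\delta}$ forces it; your observation that the geometric spacing is needed on that side is exactly the point.
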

	
 \begin{proof}
The lower bound is based upon the inequalities
\begin{eqnarray*}
			\P\left( \tau\left(T^{\frac 12 - \delta}\right) \ge \frac T2 \right) 
			&=& 
			\P\left( \frac{\tau \left( T^{\frac 12 - \delta}\right)}{T^{1-2\delta}} \ge \frac{T^{2\delta}}2 \right)
\\
			&=& \int_{T^{2\delta}/2}^\infty q(t)\, dt
\\
			&=& \int_{T^{2\delta}/2}^\infty 2\,\E\left(\frac{X}{\sqrt{t}} - 1\right )_+ dt
\\
			&=& C_1 \int_{T^{2\delta}/2}^\infty \int_{\sqrt t}^\infty 
			\left(\frac x {\sqrt t} - 1\right )e^{-x^2/2} dx dt
\\
			&\le& C_2 \int_{T^{2\delta}/2}^\infty e^{-t/2} t^{-1/2} dt
			\le C_2 \, e^{-T^{2\delta}/4},
\end{eqnarray*}
where $C_1,C_2$ are some positive absolute constants.
		
Let $ T_n := n $, then the events 
$ D_n := \left\{\tau\left(T_n^{\frac 12 - \delta}\right) \ge \frac {T_n}2 \right \} $ satisfy
\[
		\sum\limits_{n=1}^\infty \P(D_n) < \infty.
\]
Therefore, by Borel--Cantelli lemma, with probability $1$ for all sufficiently large $n$ the event $D_n$ does not hold, i.e. with probability $1$ for all sufficiently large $n$  we have
\begin{equation} \label{upest}
			\tau\left(T_n^{\frac 12 - \delta}\right) < \frac {T_n}2.
\end{equation}
Let $n\ge 2$ be such that \eqref{upest} holds for $T_n$ and let $T \in [T_{n-1}, T_{n}]$. 
Since $\tau(\cdot)$ is non-decreasing, we have 
\[
		\tau\left(T^{\frac 12 - \delta}\right) \le \tau\left(T_{n}^{\frac 12 - \delta}\right) 
		< \frac{T_n}2  = \frac{n}2 \le  n-1 = T_{n-1} \le T.
\]
This provides us with a required lower bound for sufficiently large $T$.
		
In the same way, for the upper bound we have
\begin{eqnarray*}
			\P\left( \tau\left(T^{\frac 12 + \delta}\right) \le 2T \right) 
			&=& 
			\int_0^{2T^{-2\delta}} 2\, \E\left(\frac{X}{\sqrt{t}} - 1\right )_+ dt
\\
			&\le& C_3 \int^{2T^{-2\delta}}_0 e^{-t/2} t^{-1/2} dt
			\le C_4 T^{-\delta},
\end{eqnarray*}
where $C_3,C_4$ are some positive absolute constants.
		
Consider the sequence $ T'_n := 2^n$. For the events 
$ D'_n := \left\{\tau\left(T_n^{\frac 12 + \delta}\right) \le  2 T_n \right \}$ we have 
\[
		\sum\limits_{n=1}^\infty \P(D'_n) < \infty.
\]
Therefore, by Borel--Cantelli lemma with probability $1$ for all sufficiently large $n$ the event $D'_n$ does not hold,
i.e. with probability $1$ for all sufficiently large $n$  it is true that
\begin{equation} \label{lowest}
			\tau\left(T_n^{\frac 12 + \delta}\right) > 2\,T_n .
\end{equation}
		
Let \eqref{lowest} be satisfied for some $T_n$ and let $ T \in [T_{n}, T_{n+1}] $. 
Since $\tau(\cdot)$ is non-decreasing, we have
\[
		\tau\left(T^{\frac 12 + \delta}\right) 
		\ge \tau\left(T_n^{\frac 12 + \delta}\right)  
		> 2 \, T_n = T_{n+1}  \ge T.
\]
This provides us with a required upper bound for all sufficiently large $T$.
		
\end{proof}

The next theorem describes the asymptotic behavior of the energy of the minimal concave majorant for a Wiener process. 
For some $r>0$ let $\orr$ denote MCM of a Wiener process $W$ on the whole real line, starting from the height $r$.
Then the majorant  $\orr$  is an affine function on some initial interval $[0,\theta(r)]$, its graph contains  the point
$(0,r)$ and is a tangent to the graph of $\oww$, while on $[\theta(r), \infty)$ the functions $\orr$ and $\oww$ coinсide. 
	
\begin{thm} \label{asymp_f_th}
  Let $\orr$ be the global minimal concave majorant of $W$ starting from a height $r$. 
  Then, for every fixed $r$ it is true that 
  \begin{equation} 
			\frac{|\orr|_T^2}{\log T} \stackrel{\text{a.s.}}{\longrightarrow} \frac 12,
   \qquad  \textrm{as } T\to\infty. 
  \end{equation}  
\end{thm}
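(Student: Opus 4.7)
The plan is to control $|\orr|_T^2$ by comparison with the integrals $L(a,b)$ for which the strong law \eqref{slln} is already available, using Lemma \ref{tau_est} to convert the random time indices $\tau(a)$ into deterministic bounds on $T$.

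First, I would split the energy according to the tangent construction of $\orr$. On $[0,\theta(r)]$ the majorant $\orr$ is affine with slope $\oww'(\theta(r))$, and on $[\theta(r),T]$ it coincides with $\oww$. Hence for $T>\theta(r)$,
\[
  |\orr|_T^2 \;=\; \theta(r)\,\oww'(\theta(r))^2 \;+\; \int_{\theta(r)}^{T}\oww'(t)^2\,dt .
\]
Since $r>0$ is fixed, $\theta(r)$ is almost surely finite and so is the first summand; when we divide by $\log T$ it vanishes in the limit. The task therefore reduces to proving that
\[
  \frac{1}{\log T}\int_{\theta(r)}^{T}\oww'(t)^2\,dt \;\xrightarrow[T\to\infty]{\text{a.s.}}\; \tfrac12 .
\]

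To this end I would sandwich this integral between two $L$--integrals. For any fixed $\delta\in(0,\tfrac12)$, Lemma \ref{tau_est} gives $\tau(T^{1/2-\delta})<T<\tau(T^{1/2+\delta})$ for all sufficiently large $T$, almost surely. Using the monotonicity of $\tau(\cdot)$ and writing the integral from $\tau(1)$ onwards,
\[
  L\bigl(1,T^{1/2-\delta}\bigr) \;\le\; \int_{\tau(1)}^{T}\oww'(t)^2\,dt \;\le\; L\bigl(1,T^{1/2+\delta}\bigr).
\]
Dividing by $\log T$ and rewriting $L(1,T^{1/2\pm\delta})/\log T = (\tfrac12\pm\delta)\cdot L(1,T^{1/2\pm\delta})/\log T^{1/2\pm\delta}$, the strong law \eqref{slln} yields limits $\tfrac12-\delta$ and $\tfrac12+\delta$ respectively. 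Letting $\delta\downarrow 0$ pins the limit to $\tfrac12$.

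Finally, the difference $\int_{\theta(r)}^{\tau(1)}\oww'(t)^2\,dt$ (with a sign depending on whether $\theta(r)<\tau(1)$ or not) is an a.s.\ finite constant, because $\oww'$ is non-increasing and bounded by $\oww'(\theta(r))<\infty$ away from $0$. Dividing by $\log T$ it also disappears in the limit, and the theorem follows. I do not expect any serious obstacle: the only delicate point is verifying that all the boundary pieces ($[0,\theta(r)]$, $[\theta(r)\wedge\tau(1),\theta(r)\vee\tau(1)]$) contribute only finite random constants, which is immediate from the tangent construction and the concavity of $\oww$.
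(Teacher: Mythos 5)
Your proposal is correct and follows essentially the same route as the paper: the same decomposition of $|\orr|_T^2$ into the affine initial piece plus $\int_{\theta(r)}^T\oww'(t)^2\,dt$, the same sandwich by $L(1,T^{1/2\pm\delta})$ via Lemma~\ref{tau_est}, the same appeal to the strong law \eqref{slln}, and the same $\delta\searrow 0$ conclusion. Your explicit check that the boundary pieces are a.s.\ finite constants is a slightly more careful rendering of what the paper states in passing.
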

	
\begin{proof}
Compare the quantities
\[
		|\orr|_T^2 = (\orr)'(0)^2 \theta(r) + \int_{\theta(r)}^T \oww'(t)^2\, dt, 
		\qquad T\ge \theta(r),
\]
and
\[
		L(1,T^{1/2\pm\delta}) = \int_{\tau(1)}^{\tau(T^{1/2\pm\delta})} \oww'(t)^2\, dt.
\]
They differ by a term (independent of  $T$) corresponding to the initial segment of	$\orr$ 
and by the lower and upper integration limits; notice that the lower integration limits do not depend on
$T$ in both cases.
		
By using Lemma~\ref{tau_est} for comparing the upper integration limits, we obtain
\begin{eqnarray*}
			\liminf\limits_{T  \to \infty} \frac{|\orr|_{T }^2}{\log {T }} 
			&\ge&   \liminf\limits_{T \to \infty} 
			\frac {L \left(1, T^{1/2 - \delta}\right)}{\log {T }};
\\             
			\limsup\limits_{T  \to \infty} \frac{|\orr|_{T }^2}{\log {T }} 
			&\le&   \liminf\limits_{T \to \infty} 
			\frac {L \left(1, T^{1/2 + \delta}\right)}{\log {T }}.
\end{eqnarray*}

  Taking into account the law of large numbers \eqref{slln} we have
\[
		\frac12 - \delta  \le
		\liminf\limits_{T  \to \infty} \frac{|\orr|_{T }^2}{\log {T }} 
		\le
		\limsup\limits_{T  \to \infty} \frac{|\orr|_{T }^2}{\log {T }} 
		\le  \frac12 +\delta.
\]
Letting $\delta\searrow 0$ yields the required result.
	\end{proof}
	
\section{Proof of Theorem~\ref{t:main}} 
\label{s:4} \quad
	
{\bf Upper bound.} The restriction of the global MCM starting from the height $r$ onto the interval [0,T]
belongs to the set of admissible functions: $\orr\in M_{T, r}^\prime$. We derive from Theorem~\ref{asymp_f_th} that
\[
	\limsup\limits_{T  \to \infty} \frac{I_W(T,r)}{\log {T }} 
	\le
	\limsup\limits_{T  \to \infty} \frac{|\orr|_{T }^2}{\log {T }}
	\le \frac12 \qquad \textrm{a.s.}
\]
	
{\bf Lower bound.}
For $r>0,T>0$ let $\orrt$ denote the {\it local} MCM of the Wiener process $W(t), t\in [0,T]$, starting from the height $r$.
Let $\sol$ be the unique solution of the problem we are interested in,
$|h|_T^2\to \min, h\in M'_{T,r}$. Recall that its structure is described in Proposition~\ref{maj_form}. 
Since for large $T$ it is true that $\max_{0\le s\le T} W(s)>r$,  for such $T$ the
assumption of case (b) of that proposition is verified. In particular, it follows that
\[
	\sol(t)=\orrt(t),  \qquad 0 \le t \le t_{\max},
\]
where
\[
	t_{\max}=t_{\max}(T) := \min\{t: W(t)= \max_{0\le s\le T} W(s)\}.
\]
Notice that the function $\tau(\cdot)$ can not take values from the interval $(t_{\max},T)$. Therefore, if for some $a$ it is
true that $\tau(a)<T$, then it is also true that $\tau(a)\le t_{\max}$. In this case we have
\[
	\sol(t)=\orrt(t)=\orr(t),  \qquad 0 \le t \le \tau(a).
\]
It follows that
\[
	I_W(T,r)=|\sol|_2^2 \ge \int_0^{\tau(a)} \sol'(t)^2 \, dt
	=|\orr|_{\tau(a)}^2.
\]
Let us fix $\delta\in (0,1/2)$. Let $a=a(T):= T^{1/2-\delta}$. 
Then by Lemma~\ref{tau_est} we have 
\[ 
	T^{\frac{1-2\delta}{1+2\delta}} < \tau(a) < T
\]      
a.s. for all sufficiently large $T$.
Furthermore, it follows from Theorem~\ref{asymp_f_th} that, as $T\to \infty$,
\[
	|\orr|_{\tau(a)}^2 \ge \frac{\log\tau(a)}{2}  \, (1+o(1))
	\ge \frac{1-2\delta}{2(1+2\delta)} \, \log T \, (1+o(1))
	\qquad \textrm{a.s.}
\]
 By combining these estimates, we obtain
 \[
	I_W(T,r) \ge \frac{1-2\delta}{2(1+2\delta)} \, \log T \, (1+o(1))
	\qquad \textrm{a.s.}
\]
Finally, by letting $\delta\searrow 0$, we arrive at
\[
     	I_W(T,r) \ge \frac{1}{2} \, \log T \, (1+o(1))
     	\qquad \textrm{a.s.,}
\]
as required.
	
\section{Adaptive Markovian approximation} 
\label{s:5}
	
In practice, it is often necessary to arrange an approximation (a pursuit) in real time (adaptively), when the trajectory of the approximated process is known not on the entire time interval but only before the current time instant. In view of the Markov property of Wiener process, a reasonable strategy is to define the speed of a pursuit $h$ as a function of current positions of the processes $h$ and $W$, without taking past trajectories into account, i.e. let
\begin{equation} \label{adapt_eq1}
		h'(t):= b(h(t),W(t),t).
\end{equation}
On the qualitative level the function $b(x,w,t)$ must tend to infinity, as $x-w\searrow 0$, i.e. when the approximating process approaches the dangerous boundary it accelerates its movement trying to escape from a dangerous position. One has to optimize the function $b$ trying to reach the smallest average energy consumption. It is possible to reach the same logarithmic in time order of energy consumption as in the case
of non-adaptive approximation but with somewhat larger coefficient. The difference of the coefficients represents the price we pay for 
not knowing the future of the process we try to approximate.
	
It is interesting to compare \eqref{adapt_eq1} with the form of the optimal adaptive strategy in the case of bilateral constraints
\cite{LS} where
\begin{equation} \label{adapt_eq2}
		h'(t) = b(h(t)-W(t)).
\end{equation}
The latter strategy is more simple because the speed is governed only by the distance between the approximated and the approximating
processes and does not depend on time.

Let us make a time and space change
\begin{eqnarray*}
		U(\tau) &:=& e^{-\tau/2}  W(e^\tau), 
\\   
		z(\tau) &:=& e^{-\tau/2}  h(e^\tau).
\end{eqnarray*}
Recall that $U(\cdot)$ is an Ornstein--Uhlenbeck process and therefore satisfies the equation
\begin{equation} \label{OU_eq}
		dU = -\frac {U \, d\tau}{2} + d \widetilde{W},
\end{equation}
where $\widetilde{W}$ is a Wiener process. 
We have the following expression for the derivative of $z$ 
\begin{equation}   \label{zprime_eq}
		z^\prime(\tau) = -\frac 12 z(\tau) + e^{\tau/2} h^\prime(e^\tau),   
\end{equation}
which yields
\begin{equation}   \label{hprime_eq}
		h^\prime(e^\tau) =  e^{-\tau/2} \left(z^\prime(\tau) +\frac{z(\tau)}{2}\right).  
\end{equation}
Let us consider the distance between the approximated and approximating processes
\begin{eqnarray}  \label{def_Z_eq}
		Z(\tau) &:=& z(\tau) - U(\tau).
\end{eqnarray}
	
We will study time-homogeneous diffusion strategies
\begin{equation}   \label{diffusion_eq}
		dZ =  b(Z) d \tau - d \widetilde{W}.
\end{equation}
From equations \eqref{OU_eq} and \eqref{diffusion_eq} it follows that this is equivalent to
\[
	z'(\tau) + \frac{U(\tau)}{2}  = b(Z(\tau)),
\]
which also implies
\begin{equation}   \label{zprime2_eq}
		z'(\tau) + \frac{z(\tau)}{2} = b(Z(\tau)) 	+ \frac{Z(\tau)}{2}.
\end{equation}
	
Before proceeding to optimization, let us see how the diffusion strategies act in the  initial framework. 
By \eqref{hprime_eq} and \eqref{zprime2_eq} we have
\begin{eqnarray}
		h'(e^\tau) &=& e^{-\tau/2} \left(b(Z(\tau))+ \frac{Z(\tau)}{2}\right)
		:=  e^{-\tau/2}\ \widetilde{b}(Z(\tau))
\\
		&=&   e^{-\tau/2} \ \widetilde{b}\left( e^{-\tau/2} \left(h(e^\tau)-W(e^\tau)\right) \right),
\end{eqnarray}
where $ \widetilde{b}(x):= b(x)+ \tfrac{x}{2}$.
In other words, the form of the strategy is
\begin{equation}\label{hstrategy_eq}
		h'(t) = \frac{1}{\sqrt{t}}\  \widetilde{b}\left( \frac{1}{\sqrt{t}} 
		\left(h(t)-W(t)\right)\right). 
\end{equation}
We see that this class of strategies is space-homogeneous but, in general, not time-homogeneous.
	
Now we proceed to the optimization of the shift coefficient $b(\cdot)$ determining the pursuit strategy.
Let us use some basic facts about one-dimensional time-homogeneous diffusion, cf. \cite[Ch.IV.11]{Bor} and \cite[Ch.2]{BS}. 
Let
\begin{eqnarray} \label{B}
		B(x) &:=& 2 \int^x b(u) du, 
\\ \label{p0}
		p_0(x) &:=& e^{B(x)}. 
\end{eqnarray}
Assume that condition
\begin{equation}\label{noexit}
		\int_0 \frac{dx}{p_0(x)}  = \infty
\end{equation}
is verified. Then, in Feller classification, the point	$0$ is the entrance-boundary and not an exit-boundary for diffusion \eqref{diffusion_eq}. 
This means that the diffusion $Z$ remains forever in $[0,\infty)$. Moreover, the function
\begin{equation} \label{p}
		p(x) := Q^{-1} p_0(x),
\end{equation}
where $Q = \int_0^\infty p_0(x) dx$, is the density of the unique stationary distribution for $Z$.
For the energy, by using \eqref{hprime_eq} and \eqref{zprime2_eq}, we obtain (a.s., as $T\to\infty$)
\begin{eqnarray*}
		\int_1^T h'(t)^2 dt &=&  \int_0^{\log T}  h'(e^\tau)^2   e^\tau d \tau
		= \int_0^{\log T} \left( z^\prime(\tau) +\frac{z(\tau)}{2}  \right)^2 d\tau  
\\
		&=& \int_{0}^{\log T} \left( b(Z(\tau)) + \frac {Z(\tau)}2\right) ^2 d\tau 
\\
		&\sim& {\log T} \int_0^{\infty} \left( b(x) + \frac x2\right) ^2 p(s) dx
\\
		&=& {\log T} \int_0^{\infty} \left( \left(  \frac {\log p}{2}\right)^\prime(x)  + \frac x2\right) ^2 p(x) dx
\\
		&=& {\log T} \int_0^{\infty}  \left( \frac {p^\prime(x)^2}{4p(x)} 
		+ \frac{x p^\prime(x)}{2} + \frac{x^2 p(x)}{4} \right) dx
\\
		&=& {\log T} \left( -\frac 12 + \int_0^{\infty}  \left( \frac {p^\prime(x)^2}{4p(x)} 
		+ \frac{x^2 p(x)}{4} \right) dx \right) 
\\
		&:=& -\frac {\log T}2 + \frac {\log T}{4} J(p).
\end{eqnarray*}
Taking into account condition \eqref{noexit}, it remains to solve the variational problem 
	\begin{equation*}
		\min \left\lbrace J(p) \Big|  \int_0^\infty p(x) dx = 1, p(0)=0 \right\rbrace 
\end{equation*}
over the class of densities concentrated on $[0, \infty)$.
After the variable change 
\[
	y(x) := p(x)^{1/2}, 
\]
the variational problem transforms into
\[
	\min\left\{  \int_0^\infty\left( 4y'(x)^2+x^2 y(x)^2 \right) dx \  \Big| \int_0^\infty y(x)^2 dx = 1, y(0)=0 \right\}.
\]	
We show in the next section that this minimum equals $6$; it is attained at the function
\[
	y(x) = (2/\pi)^{1/4}\, x\, \exp(-x^2/4).
\]
It follows that the asymptotic energy behavior for the optimal strategy is
\[
	\int_1^T h'(t)^2 dt \sim  \log T, \quad T\to\infty,
\]
i.e. the optimal choice of the shift in the adaptive setting leads to two times larger energy consumption than	for the optimal
strategy in the non-adaptive setting. 
	
In order to find the optimal shift, write
\[
	p(x)=y(x)^2=  (2/\pi)^{1/2}\, x^2\, \exp(-x^2/2)
\]
and we find from \eqref{B} -- \eqref{p}
\[
	b(x)= \frac12\, (\ln p)'(x) = \frac 1x - \frac{x}{2}. 
\]
Note that the density $p$ indeed satisfies the necessary condition \eqref{noexit}. 
	
Returning to the initial problem, we obtain the shift $\widetilde{b}(x)=\tfrac{1}{x}$, thus the strategy \eqref{hstrategy_eq} takes the form
\[
	h'(t) =  \frac{1}{h(t)-W(t)} \,. 
\]
Curiously, the optimal diffusion strategy is not only space-homogeneous but also time-homogeneous, unlike arbitrary strategies of this class.

\section{Solution of the variational problem} 
\label{s:6}

\subsection{Quantum harmonic oscillator}
Consider the Sturm--Liouville problem on the eigenvalues of a differential operator
\[
	\begin{cases}
		-4y''(x)+x^2 y(x)=\gamma \, y(x), & x\ge 0, \\
		y(0)=0.
	\end{cases}
\]
It represents a special case of the quantum harmonic oscillator equation, extensively studied by physicists, see \cite[\S 23]{LaLi}. 
Its solution is well known. Usually one considers this equation on the entire real line. When performing the restriction to  $[0,\infty)$,
one should take into account the boundary condition $y(0)=0$, hence, to keep the restrictions to  $[0,\infty)$ of {\it odd} solutions 
on $\R$ and multiply them by $\sqrt{2}$ in order to keep the normalization. We arrive at the orthonormal base $L_2[0,\infty)$ that consists
of the functions $\psi_k, k\in 2\N-1$, given by
\[
	\psi_k(x) = (2^k k!)^{-1/2} (2/\pi)^{1/4} H_k(x/\sqrt{2}) \exp(-x^2/4),
\]
where  $H_k(x)= (-1)^k e^{x^2}\tfrac{d^k}{dx^k}(e^{-x^2})$ are Hermite polynomials; these functions satisfy
\[
	-4\psi_k''(x)+x^2 \psi_k(x)=\gamma_k \psi_k(x),
\]
where $\gamma_k=2(2k+1)$. 
	
In particular, the minimal eigenvalue is $\gamma_1=6$, $H_1(x)=2x$, while the corresponding eigenfunction is
$\psi_1(x) = (2/\pi)^{1/4}\, x\, \exp(-x^2/4)$.
	
\subsection{Minimization}
Consider quadratic form 
\[
	G(y,z) := \int_0^\infty\left( 4y'(x)z'(x)+x^2 y(x)z(x) \right) dx.
\]
For twice differentiable functions satisfying additional assumption $y(0)=0$  integration by parts yields 
\[
	G(y,z) := \int_0^\infty\left( -4y''(x)+x^2 y(x)\right) z(x) dx.
\]
In particular, 
\[
	G(\psi_k,\psi_l) = \int_0^\infty \gamma_k \psi_k(x)\psi_l(x) dx =
	\begin{cases}
		\gamma_k,& k=l,\\
		0,& k\not= l,
	\end{cases}
\]
since $(\psi_k)$ is an orthonormal base.

If $y=\sum_{k\in 2\N-1} c_k \psi_k$, then
\[
	\int_0^\infty\left( 4y'(x)^2+x^2 y(x)^2 \right) dx 
	=G(y,y) = \sum_{k\in 2\N-1} c_k^2\gamma_k
	\ge  \sum_{k\in 2\N-1} c_k^2\gamma_1 = \gamma_1 \int_0^\infty y(x)^2 dx,
\]
and for $y=\psi_1$ the equality is attained in this chain.
Therefore,
\[
	\min\left\{  \int_0^\infty\left( 4y'(x)^2+x^2 y(x)^2 \right) dx \  \Big| \int_0^\infty y(x)^2 dx = 1 \right\} =\gamma_1 = 6.
\]	
\bigskip
	
The authors are grateful to A.I.\,Nazarov for useful advice.


\end{document}